\nonstopmode \numberwithin{equation}{section}
\newtheorem{definition}{Definition}[section]
\newtheorem{theorem}{Theorem}
\newtheorem{lemma}{Lemma}[section]
\newtheorem{remark}{Remark}[section]
\begin{document}
\title{\textbf{Inequalities of extended $(p,q)$-beta and confluent hypergeometric function}}
\author{Shahid Mubeen$^{1}$, Kottakkaran Sooppy Nisar$^{2,*}$, Gauhar Rahman$^{3}$ and\\ Muhammad Arshad$^{3}$ }
\date{}
\maketitle

\noindent $^{1}$ Department of Mathematics, University of Sargodha,   Sargodha, Pakistan, email: smjhanda@gmail.com\newline
\noindent $^{2}$ Department of Mathematics, College of Arts \& Science-Wadi Addawaser, Prince Sattam bin Abdulaziz University, Saudi Arabia, email: n.sooppy@psau.edu.sa\newline
\noindent $^{3}$ Department of Mathematics, International Islamic  University, Islamabad, Pakistan,\\ 
emails: gauhar55uom@gmail.com (G. Rahman), marshad\_zia@yahoo.com (M. Arshad)

\begin{abstract}
 In this present paper, we establish the log convexity  and Tur\'{a}n type inequalities of extended $(p,q)$-beta functions. Also, we present the log-convexity, the monotonicity and Tur\'{a}n type inequalities for extended $(p,q)$-confluent hypergeometric function by using the inequalities of extended $(p,q)$-beta functions.
\end{abstract}

\textbf {keywords}: Extended beta functions, extended hypergeometric functions, log-convexity, Tur\'{a}n-type inequalities.

\textbf {MSC[2010]}: 33B15, 33B99.

\section{Introduction}\label{Intro1}
We begin with the classical gamma function
 \begin{eqnarray*}
 \Gamma(z)=\int\limits_{0}^{\infty}t^{z-1}e^{-t}dt,\quad \Re(z)>0.
 \end{eqnarray*}
 In another way, it is defined as
 \begin{eqnarray*}
\Gamma(z)=\lim_{n \rightarrow \infty}\frac{n!n^{z-1}}{(z)_{n}}
\end{eqnarray*}
where $(\alpha)_n$ is the Pochhammer symbol defined as
\begin{eqnarray*}
 (\alpha)_{n}= \left\{\begin{array}{l}\label{1}
 \alpha(\alpha+1)(\alpha+2)\cdots(\alpha+n-1); \quad for \quad
  n\geq1, \alpha\neq0\\
  1 \quad if \quad n=0
  \end{array}\right.
  \end{eqnarray*}
 and
\begin{eqnarray*}
\Gamma(z+1)=z\Gamma(z)
\end{eqnarray*}
The relation between Pochhammer symbol and gamma function is given below
\begin{eqnarray*}
(z)_{n}=\frac{\Gamma(z+n)}{\Gamma(z)}.
\end{eqnarray*}
The beta function is defined by
\begin{eqnarray}\label{beta}
B(x,y)=\int\limits_{0}^{1}t^{x-1}(1-t)^{y-1}dt,
\end{eqnarray}
$(\Re(x)>0, \Re(y)>0)$\\
and
\begin{eqnarray}
B(x,y)=\frac{\Gamma(x)\Gamma(y)}{\Gamma(x+y)}, \Re(x)>0, \Re(y)>0.
\end{eqnarray}
Chaudhry and Zubair \cite{CH} and Chaudhry et al. \cite{CHS} defined the following extended gamma and beta functions
\begin{eqnarray}\label{Egamma}
 \Gamma_p(z)=\int\limits_{0}^{\infty}t^{z-1}e^{-t-pt^{-1}}dt,
 \end{eqnarray}
 $\Re(z)>0, p\geq0$. When $p=0$, then $\Gamma_p$  tends to the classical gamma function $\Gamma$,\\
and
\begin{eqnarray}\label{Ebeta}
B(x,y;p)=\int\limits_{0}^{1}t^{x-1}(1-t)^{y-1}e^{-\frac{p}{t(1-t)}}dt
\end{eqnarray}
(where $\Re(p)>0, \Re(x)>0, \Re(y)>0$) respectively. When $p=0$, then $B(x,y;0)=B(x,y)$.
Recently Choi et al. \cite{Choi2014} introduced the following extension of extended beta function as
\begin{eqnarray}\label{EEbeta}
B(x,y;p,q)=B_{p,q}(x,y)=\int\limits_{0}^{1}t^{x-1}(1-t)^{y-1}e^{-\frac{p}{t}-\frac{q}{1-t}}dt
\end{eqnarray}
(where $\Re(p)>0, \Re(q)>0, \Re(x)>0, \Re(y)>0$).

It is clear that when $p=q$, then (\ref{EEbeta}) reduces to the well known extended beta function (\ref{Ebeta}). Similarly if $p=q=0$, then  (\ref{EEbeta}) reduces to the classical beta function (\ref{beta}).
In the same paper, they also defined the following extension of extended  confluent hypergeometric function by
\begin{eqnarray}\label{EChyp}
\Phi_{p,q}\Big(\beta;\gamma;z\Big)=\sum\limits_{n=0}^{\infty}\frac{B(\beta+n;\gamma-\beta;p,q)}{B(\beta,\gamma-\beta)}
\frac{z^n}{n!}
\end{eqnarray}
$$\Big( p\geq0, q\geq0,  \Re(\gamma)>\Re(\beta)>0 \Big).$$
The integral representations of extension of extended  confluent hypergeometric function is given by
\begin{align}\label{intch}
\Phi_{p,q}\Big(\beta;\gamma;z\Big)=\frac{1}{B(\beta,\gamma-\beta)}\int_0^1t^{\beta-1}(1-t)^{\gamma-\beta-1}
\exp\Big(zt-\frac{p}{t}-\frac{q}{1-t}\Big)dt
\end{align}
$$\big(p\geq0, q\geq0,  \Re(\gamma)>\Re(\beta)>0\Big).$$

Note that for $p=q$,  the series  (\ref{EChyp}) respectively reduces to the extended confluent hypergeometric series. Similarly for $p=q=0$ the series  (\ref{EChyp}) respectively reduces to the classical confluent hypergeometric series.


\section{Main results: Inequalities of extended $(p,q)$-beta function}\par
In this section, we establish some inequalities which involve  extended $(p,q)$-beta functions
by using some natural inequalities \cite{CB}. For this continuation of our study, we recall the following well-known Chebychev's integral inequality and H\"{o}lder-Rogers inequality.
 \begin{lemma}(see \cite{Dragomir,AG})
 Let the functions $f,g:[a,b]\subseteq {R}\rightarrow {R}$ are asynchronous for all $x\in [a,b]$  and $p(x):[a,b]\subseteq {R}\rightarrow {R}$ is a positive integrable function, then
\begin{eqnarray}\label{Cheby}
\int\limits_{a}^{b}p(x)f(x)dx\int\limits_{a}^{b}p(x)g(x)dx\leq\int\limits_{a}^{b}p(x)dx\int\limits_{a}^{b}p(x)f(x)g(x)dx.
\end{eqnarray}
\end{lemma}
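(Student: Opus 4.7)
The plan is to deduce the inequality by the classical symmetrization trick. The hypothesis that $f$ and $g$ are asynchronous (interpreted, consistently with the direction of the stated inequality, as $(f(x)-f(y))(g(x)-g(y)) \geq 0$ for all $x,y \in [a,b]$) together with the positivity of $p$ yields, after multiplying by the nonnegative weight $p(x)p(y)$ and integrating over the square $[a,b]\times[a,b]$,
$$\int_a^b\!\!\int_a^b p(x)p(y)\bigl(f(x)-f(y)\bigr)\bigl(g(x)-g(y)\bigr)\,dx\,dy \;\geq\; 0.$$

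Next I would expand the integrand into the four summands $p(x)p(y)f(x)g(x)$, $-p(x)p(y)f(x)g(y)$, $-p(x)p(y)f(y)g(x)$, and $p(x)p(y)f(y)g(y)$, and apply Fubini's theorem to each. Swapping the dummy variables $x \leftrightarrow y$ identifies the first summand with the fourth and the second with the third, so the double integral collapses to
$$2\int_a^b p(x)\,dx \cdot \int_a^b p(x)f(x)g(x)\,dx \;-\; 2\int_a^b p(x)f(x)\,dx \cdot \int_a^b p(x)g(x)\,dx \;\geq\; 0,$$
which is exactly \eqref{Cheby} after cancelling the factor $2$ and rearranging.

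The only genuinely delicate point is the sign convention attached to the word \emph{asynchronous}: some sources reserve it for oppositely-ordered functions (which would flip the inequality) and some for similarly-ordered functions. Since the stated conclusion is the standard Chebyshev sum inequality, I would adopt the convention of the cited references \cite{Dragomir,AG} so that the hypothesis indeed furnishes $(f(x)-f(y))(g(x)-g(y)) \geq 0$; modulo this terminological bookkeeping, the proof is entirely mechanical and needs only integrability of $f$ and $g$ on $[a,b]$ (implicit in the fact that both sides of \eqref{Cheby} must make sense as Lebesgue integrals).
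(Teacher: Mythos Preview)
The paper does not supply its own proof of this lemma; it is quoted as a known result from the cited references \cite{Dragomir,AG}, so there is nothing to compare against. Your symmetrization argument is the standard proof of the weighted Chebyshev integral inequality and is correct; your caveat about the term \emph{asynchronous} is well taken, since the direction of the inequality as stated matches the \emph{synchronous} (similarly ordered) case rather than the usual meaning of asynchronous, and the paper's own applications of the lemma are likewise consistent with that reading.
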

\begin{definition} In \cite{AT}, a function $f:(a,b)\rightarrow {R}$ is said to be convex if for any $x_1,x_2\in (a,b)$ and $\alpha\in(0,1)$
\begin{eqnarray}
f(\alpha x_1+(1-\alpha)x_2)\leq\alpha f(x_1)+(1-\alpha)f(x_2).
\end{eqnarray}
It shows that when we move from $x_1$ to $x_2$, the line joining the points $(x_1,f(x_1))$ and $(x_2,f(x_2))$ lies always above the graph of $f$.
\end{definition}
\begin{definition} A function $f$ is said to be a log-convex if $f>0$ and $\log f$ is convex \emph{i.e.},
for all $x_1,x_2\in I$ (where I is an interval) and $\alpha\in (0,1)$, we have
\begin{eqnarray*}
\log f(\alpha x_1 +(1-\alpha)x_2)\leq\alpha \log f(x_1)+(1-\alpha)\log f(x_2)=\log(f^{\alpha}(x)f^{1-\alpha}(x_2)).
\end{eqnarray*}
This implies that
\begin{eqnarray}
f(\alpha x_1 +(1-\alpha)x_2)\leq f^{\alpha}(x_1)f^{1-\alpha}(x_2).
\end{eqnarray}
\end{definition}
\begin{lemma}\label{lem2} (H\"{o}lder inequality  \cite{LJ5}) If $\theta_1$ and $\theta_2$ are positive real numbers such that $\frac{1}{\theta_1}+\frac{1}{\theta_2}=1$, then the following inequality holds for integrable functions $f,g:[a,b]\rightarrow {R}$:
\begin{eqnarray}\label{Holder}
|\int\limits_{a}^{b}f(x)g(x)dx|\leq(\int\limits_{a}^{b}|f|^{\theta_1}dx)^{\frac{1}{\theta_1}}(\int\limits_{a}^{b}|g|^{\theta_2}dx)^{\frac{1}{\theta_2}}.
\end{eqnarray}
\end{lemma}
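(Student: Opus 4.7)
The plan is to deduce the integral Hölder inequality from a pointwise scalar (Young-type) inequality together with a standard normalization. Specifically, I would first establish the elementary fact that
$$uv\le\frac{u^{\theta_1}}{\theta_1}+\frac{v^{\theta_2}}{\theta_2},\qquad u,v\ge 0,$$
and then apply it pointwise under the integral after rescaling $f$ and $g$ to have unit $L^{\theta_1}$ and $L^{\theta_2}$ "norms," respectively.

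To prove the scalar inequality I would substitute $u=e^{s/\theta_1}$ and $v=e^{t/\theta_2}$, which reduces the claim to $e^{s/\theta_1+t/\theta_2}\le e^{s}/\theta_1+e^{t}/\theta_2$. This is precisely Jensen's inequality applied to the convex exponential with weights $1/\theta_1$ and $1/\theta_2$ at the points $s,t$; equivalently, it is the weighted AM--GM inequality. The convexity in Definition 1.2 is exactly what is needed here.

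After disposing of the degenerate cases (one of $\int_a^b|f|^{\theta_1}\,dx$ or $\int_a^b|g|^{\theta_2}\,dx$ is zero or infinite, in which case the inequality is trivial), I would set
$$F(x)=\frac{|f(x)|}{\bigl(\int_a^b|f|^{\theta_1}\bigr)^{1/\theta_1}},\qquad G(x)=\frac{|g(x)|}{\bigl(\int_a^b|g|^{\theta_2}\bigr)^{1/\theta_2}},$$
so that $\int_a^b F^{\theta_1}\,dx=\int_a^b G^{\theta_2}\,dx=1$. Applying the scalar inequality to $(F(x),G(x))$ and integrating over $[a,b]$ produces $\int_a^b FG\,dx\le 1/\theta_1+1/\theta_2=1$. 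Multiplying through by the two normalizing constants and using $|\int fg|\le\int|f||g|$ yields the stated inequality.

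The main obstacle is really conceptual rather than computational: recognizing that the exponent conjugacy condition $1/\theta_1+1/\theta_2=1$ is exactly what makes the Young step collapse after integration. Everything else (verifying Young by convexity of $\exp$, normalizing, integrating, absorbing absolute values) is routine bookkeeping. A minor point I would double-check is measurability and integrability of $|f|^{\theta_1}$ and $|g|^{\theta_2}$, but for the integrable real-valued functions on a finite interval assumed in the statement these concerns cause no genuine trouble.
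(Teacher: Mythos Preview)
Your argument is the standard and correct proof of H\"older's inequality: establish Young's inequality $uv\le u^{\theta_1}/\theta_1+v^{\theta_2}/\theta_2$ via convexity of the exponential (or weighted AM--GM), normalize $|f|$ and $|g|$ so that their $\theta_1$- and $\theta_2$-power integrals equal $1$, integrate the pointwise bound, and undo the normalization. Nothing is missing.

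There is nothing to compare it against, however: the paper does not supply a proof of Lemma~\ref{lem2}. The lemma is quoted as a classical result with a reference to Rudin \cite{LJ5}, and the paper then simply \emph{uses} it in the proofs of Theorems~\ref{th2} and~\ref{th3}. Your write-up is essentially the proof one finds in that reference, so in that sense you have reconstructed exactly what the citation points to.
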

\begin{theorem}\label{th1} If $x,y, x_1,y_1$  are positive real numbers satisfying the condition
\begin{eqnarray}
(x-x_1)(y-y_1)\geq0,
\end{eqnarray}
then for the extended $(p,q)$-beta function, we have the inequality
\begin{eqnarray}\label{eq1}
B_{p,q}(x,y_1)B_{p,q}(x_1,y)\leq B_{p,q}(x_1,y_1)B_{p,q}(x,y),
\end{eqnarray}
\end{theorem}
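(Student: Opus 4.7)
The plan is to deduce inequality (\ref{eq1}) as a direct application of Chebyshev's integral inequality (\ref{Cheby}) to the integral representation (\ref{EEbeta}) of $B_{p,q}$. The key observation is that the exponential weight $e^{-p/t-q/(1-t)}$ together with the monomial factors can be split cleanly into a common positive weight plus two monotone factors whose monotonicity types are controlled by the sign condition $(x-x_1)(y-y_1)\geq 0$.

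Concretely, on the interval $[0,1]$ I would set
\begin{equation*}
w(t)=t^{x_1-1}(1-t)^{y_1-1}\exp\!\Big(-\tfrac{p}{t}-\tfrac{q}{1-t}\Big),\qquad f(t)=t^{x-x_1},\qquad g(t)=(1-t)^{y-y_1}.
\end{equation*}
Then $w$ is a positive integrable function on $(0,1)$ (integrability is immediate from the exponential decay at the endpoints, as already used in the definition of $B_{p,q}$). A straightforward computation identifies
\begin{equation*}
\int_0^1 w(t)\,dt=B_{p,q}(x_1,y_1),\quad \int_0^1 w(t)f(t)\,dt=B_{p,q}(x,y_1),
\end{equation*}
\begin{equation*}
\int_0^1 w(t)g(t)\,dt=B_{p,q}(x_1,y),\quad \int_0^1 w(t)f(t)g(t)\,dt=B_{p,q}(x,y).
\end{equation*}

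Next I would verify that $f$ and $g$ are asynchronous on $(0,1)$ under the hypothesis $(x-x_1)(y-y_1)\geq 0$. Indeed, $f(t)=t^{x-x_1}$ is non-decreasing in $t$ iff $x\geq x_1$, while $g(t)=(1-t)^{y-y_1}$ is non-increasing in $t$ iff $y\geq y_1$. Thus if $x\geq x_1$ and $y\geq y_1$, $f$ is monotone increasing and $g$ monotone decreasing; while if $x\leq x_1$ and $y\leq y_1$, the roles are reversed. Either way, $(f(t_1)-f(t_2))(g(t_1)-g(t_2))\leq 0$ for all $t_1,t_2\in(0,1)$, which is exactly the asynchronicity hypothesis of the Chebyshev-type lemma.

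Applying inequality (\ref{Cheby}) with the weight $w$ and the functions $f,g$ and substituting the four beta values identified above immediately yields
\begin{equation*}
B_{p,q}(x,y_1)\,B_{p,q}(x_1,y)\leq B_{p,q}(x_1,y_1)\,B_{p,q}(x,y),
\end{equation*}
which is (\ref{eq1}). No real obstacle arises; the only point requiring care is the case analysis establishing asynchronicity from the single sign condition $(x-x_1)(y-y_1)\geq 0$, together with ensuring that the integrability hypothesis of the Chebyshev lemma is met, which follows from the exponential factor controlling behaviour at $t=0$ and $t=1$.
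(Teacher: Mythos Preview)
Your proposal is correct and follows essentially the same route as the paper: the paper likewise sets $f(t)=t^{x-x_1}$, $g(t)=(1-t)^{y-y_1}$ and weight $h(t)=t^{x_1-1}(1-t)^{y_1-1}\exp(-p/t-q/(1-t))$, and then applies the Chebyshev integral inequality~(\ref{Cheby}) to obtain~(\ref{eq1}). Your case analysis for the monotonicity of $f$ and $g$ under the sign condition is in fact cleaner than the paper's brief derivative check.
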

\begin{proof} Consider the mappings $f,g,h:[0,1]\rightarrow[0,\infty)$ given by\\
$f(t)=t^{x-x_1}$, $g(t)=(1-t)^{y-y_1}$ and $h(t)= t^{x_1-1}(1-t)^{y_1-1}\exp\Big(-\frac{p}{t}-\frac{q}{1-t}\Big)$.\\
Now, differentiation of $f$ and $g$ gives
\begin{eqnarray*}
f^{\prime}(t)=(x-x_1)t^{x-x_1-1},\quad g^{\prime}(t)=(y_1-y)(1-x)^{y-y_1-1}.
\end{eqnarray*}
This show that $f$ and $g$ have the same monotonicity on $[0,1]$. \\
Applying the Chebyshev's integral inequality (\ref{Cheby}), for the above defined functions $f$, $g$ and $h$, we have
\begin{align*}
&\Big(\int_a^bt^{x-1}(1-t)^{y_1-1}\exp\Big(-\frac{p}{t}-\frac{q}{1-t}\Big)dt\Big)
\Big(\int_a^bt^{x_1-1}(1-t)^{y-1}\exp\Big(-\frac{p}{t}-\frac{q}{1-t}\Big)dt\Big)\\
&\leq\Big(\int_a^bt^{x_1-1}(1-t)^{y_1-1}\exp\Big(-\frac{p}{t}-\frac{q}{1-t}\Big)dt\Big)
\Big(\int_a^bt^{x-1}(1-t)^{y-1}\exp\Big(-\frac{p}{t}-\frac{q}{1-t}\Big)dt\Big)
\end{align*}
which implies that,
\begin{eqnarray*}
B_{p,q}(x,y_1)B_{p,q}(x_1,y)&\leq& B_{p,q}(x_1,y_1)B_{p,q}(x,y)
\end{eqnarray*}
which completes the desired proof.
\end{proof}
\begin{theorem}\label{th2}
The function $(p,q)\mapsto B_{p,q}(x,y)$ is log convex on $(0,\infty)$ for each $x,y>0$.
Moreover, the function $B_{p,q}(x,y)$ satisfy the following Tur\'{a}n type inequality
\begin{eqnarray}
B_{p,q}^2(x,y)-B_{p+a,q+a}(x,y)B_{p-a,q-a}(x,y)\leq0,
\end{eqnarray}
for all real $a$.
\end{theorem}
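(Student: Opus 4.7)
The plan is to prove log-convexity of $(p,q)\mapsto B_{p,q}(x,y)$ by a direct application of the H\"{o}lder--Rogers inequality (Lemma \ref{lem2}) to the integral representation \eqref{EEbeta}, and then deduce the Tur\'{a}n-type inequality as the special case corresponding to the midpoint of two symmetric points.

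For log-convexity, I would fix $(p_1,q_1),(p_2,q_2)\in(0,\infty)^2$ and $\alpha\in(0,1)$, and set conjugate exponents $\theta_1=1/\alpha$, $\theta_2=1/(1-\alpha)$, so that $1/\theta_1+1/\theta_2=1$. The key algebraic observation is that the integrand of $B_{\alpha p_1+(1-\alpha)p_2,\,\alpha q_1+(1-\alpha)q_2}(x,y)$ factors as
\begin{align*}
t^{x-1}(1-t)^{y-1}\exp\!\Big(-\tfrac{\alpha p_1+(1-\alpha)p_2}{t}-\tfrac{\alpha q_1+(1-\alpha)q_2}{1-t}\Big)
&=\Big[t^{x-1}(1-t)^{y-1}e^{-p_1/t-q_1/(1-t)}\Big]^{\alpha}\\
&\quad\times\Big[t^{x-1}(1-t)^{y-1}e^{-p_2/t-q_2/(1-t)}\Big]^{1-\alpha},
\end{align*}
because $\alpha+(1-\alpha)=1$ splits the polynomial prefactor and the exponential decouples linearly in $(p,q)$. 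Applying \eqref{Holder} with $f$ and $g$ equal to the two bracketed factors (raised to $\theta_1$ and $\theta_2$ respectively so the $\alpha,1-\alpha$ powers disappear inside the integrals) yields
\begin{eqnarray*}
B_{\alpha p_1+(1-\alpha)p_2,\,\alpha q_1+(1-\alpha)q_2}(x,y)\leq B_{p_1,q_1}(x,y)^{\alpha}\,B_{p_2,q_2}(x,y)^{1-\alpha},
\end{eqnarray*}
which is exactly the log-convexity condition.

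For the Tur\'{a}n-type inequality, I would specialize the log-convexity inequality with $\alpha=1/2$ and the two symmetric points $(p_1,q_1)=(p+a,q+a)$ and $(p_2,q_2)=(p-a,q-a)$, whose midpoint is $(p,q)$ (this requires $p\pm a>0$ and $q\pm a>0$, which is the implicit admissibility condition on $a$). The log-convexity bound then reads
\begin{eqnarray*}
B_{p,q}(x,y)\leq B_{p+a,q+a}(x,y)^{1/2}\,B_{p-a,q-a}(x,y)^{1/2},
\end{eqnarray*}
and squaring both sides gives the claimed inequality $B_{p,q}^{2}(x,y)-B_{p+a,q+a}(x,y)B_{p-a,q-a}(x,y)\leq 0$.

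The only nontrivial step is the bookkeeping in the H\"{o}lder application, namely recognizing that both the power weight $t^{x-1}(1-t)^{y-1}$ and the exponential weight split cleanly under the convex combination; once this factorization is in place, everything reduces to a one-line application of Lemma \ref{lem2}. No additional analytic machinery is required, and the positivity of the integrand guarantees that the inequalities are genuine (no absolute-value issues arise).
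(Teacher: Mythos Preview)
Your proposal is correct and follows essentially the same approach as the paper: factor the integrand of $B_{\alpha p_1+(1-\alpha)p_2,\,\alpha q_1+(1-\alpha)q_2}(x,y)$ into an $\alpha$-power times a $(1-\alpha)$-power, apply the H\"{o}lder--Rogers inequality with exponents $1/\alpha$ and $1/(1-\alpha)$, and then specialize to $\alpha=1/2$ with the symmetric points $(p\pm a,q\pm a)$ to obtain the Tur\'{a}n-type inequality. Your explicit mention of the admissibility constraint $p\pm a>0$, $q\pm a>0$ is in fact more careful than the paper, which states the result ``for all real $a$'' without qualification.
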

\begin{proof}
From the definition of log-convexity, it will be sufficient to prove that
\begin{eqnarray}\label{eq2}
 B_{\alpha p_1+(1-\alpha) p_2,\alpha q_1+(1-\alpha)q_2}(x,y)\leq\Big( B_{p,q}(x,y)\Big)^\alpha\Big( B_{p,q}(x,y)\Big)^{1-\alpha},
 \end{eqnarray}
 for $\alpha\in[0,1]$, $p_1,p_2, q_1, q_2>0$ and for a fixed $x,y>0$.
 Obviously, (\ref{eq2}) is true for $\alpha=0$ and $\alpha=1$. Assume that $\alpha\in(0,1)$, then it follows from (\ref{EEbeta}) that
 \begin{align}
&B_{\alpha p_1+(1-\alpha) p_2,\alpha q_1+(1-\alpha)q_2}(x,y)\notag\\
&=\int_0^1t^{x-1}(1-t)^{y-1}\exp\Big(\frac{-\alpha p_1-(1-\alpha)p_2}{t}+\frac{-\alpha q_1-(1-\alpha)q_2}{1-t}\Big)\notag\\
&=\Big(\int_0^1t^{x-1}(1-t)^{y-1}\exp\Big(-\frac{ p_1}{t}-\frac{q_1}{1-t}\Big)\Big)^\alpha\notag\\
&\times\Big(\int_0^1t^{x-1}(1-t)^{y-1}\exp\Big(-\frac{ p_1}{t}-\frac{q_1}{1-t}\Big)\Big)^{1-\alpha} \label{eq3}
 \end{align}
Let $\theta_1=\frac{1}{\alpha}$ and $\theta_2=\frac{1}{(1-\alpha)}$. Clearly $\theta_1>1$ and $\theta_1+\theta_2=\theta_1\theta_2$. Thus applying the H\"{o}lder-Rogers inequality (\ref{Holder})  for integrals in (\ref{eq3}) gives
\begin{align}
B_{\alpha p_1+(1-\alpha) p_2,\alpha q_1+(1-\alpha)q_2}(x,y)&<
\Big(\int_0^1t^{x-1}(1-t)^{y-1}\exp\Big(-\frac{ p_1}{t}-\frac{q_1}{1-t}\Big)\Big)^\alpha\notag\\
&\times\Big(\int_0^1t^{x-1}(1-t)^{y-1}\exp\Big(-\frac{ p_1}{t}-\frac{q_1}{1-t}\Big)\Big)^{1-\alpha}\notag\\
&=\Big( B_{p,q}(x,y)\Big)^\alpha\Big( B_{p,q}(x,y)\Big)^{1-\alpha}, \label{eq4}
\end{align}
This implies that $(p,q)\mapsto B_{p,q}(x,y)$ is log convex on $(0,\infty)$.\\
Now, taking $\alpha=\frac{1}{2}$, $p_1=p-a$, $p_2=p+a$, and $q_1=q-a$, $q_2=q+a$, the inequality (\ref{eq4}) yields
\begin{eqnarray*}
B_{p,q}^2(x,y)-B_{p+a,q+a}(x,y)B_{p-a,q-a}(x,y)\leq0.
\end{eqnarray*}
\end{proof}
\begin{theorem}\label{th3}
The function $(x,y)\mapsto B_{p,q}(x,y)$ is logarithmic convex on $(0,\infty)\times(0,\infty)$, for all $p,q\geq0$. In particular
$$B_{p,q}^2\Big(\frac{x_1+x_2}{2}, \frac{y_1+y_2}{2}\Big)\leq B_{p,q}(x_1,y_1)B_{p,q}(x_2,y_2).$$
\end{theorem}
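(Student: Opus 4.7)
The plan is to follow the same strategy as in Theorem \ref{th2}, only now varying the parameters $(x,y)$ with $p,q$ held fixed. The general log-convexity statement on $(0,\infty)\times(0,\infty)$ amounts to showing, for any $\alpha\in(0,1)$ and any $(x_1,y_1),(x_2,y_2)\in(0,\infty)\times(0,\infty)$, that
\begin{equation*}
B_{p,q}\bigl(\alpha x_1+(1-\alpha)x_2,\,\alpha y_1+(1-\alpha)y_2\bigr)\leq B_{p,q}(x_1,y_1)^{\alpha}\,B_{p,q}(x_2,y_2)^{1-\alpha}.
\end{equation*}

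The first step is to use the integral representation (\ref{EEbeta}) and split the integrand multiplicatively. Writing $x=\alpha x_1+(1-\alpha)x_2$ and $y=\alpha y_1+(1-\alpha)y_2$, I would exploit the identity
\begin{equation*}
t^{x-1}(1-t)^{y-1}e^{-p/t-q/(1-t)} = \bigl[t^{x_1-1}(1-t)^{y_1-1}e^{-p/t-q/(1-t)}\bigr]^{\alpha}\bigl[t^{x_2-1}(1-t)^{y_2-1}e^{-p/t-q/(1-t)}\bigr]^{1-\alpha},
\end{equation*}
which relies only on the fact that $\alpha+(1-\alpha)=1$ (applied to each of the exponents of $t$, $1-t$, and the exponential factor).

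The second step is to apply H\"older's inequality (Lemma \ref{lem2}) to the integral of this product with conjugate exponents $\theta_1=1/\alpha$ and $\theta_2=1/(1-\alpha)$. Since the integrand is nonnegative on $(0,1)$, no absolute values are needed, and the H\"older bound is exactly the right-hand side $B_{p,q}(x_1,y_1)^{\alpha}B_{p,q}(x_2,y_2)^{1-\alpha}$. Finally, taking $\alpha=1/2$ reduces the general inequality to the displayed special case.

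The main potential pitfall is not a deep one: it is just to make sure the exponent split is performed cleanly and that the H\"older exponents are chosen correctly so that no absolute values or convergence issues appear. Because the integrand is strictly positive and the integral $B_{p,q}(x,y)$ is finite for $x,y>0$ and $p,q\ge 0$, both sides of the H\"older inequality are finite, and the argument is essentially a direct transcription of the proof of Theorem \ref{th2} with the roles of $(p,q)$ and $(x,y)$ interchanged.
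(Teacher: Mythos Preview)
Your proposal is correct and follows essentially the same approach as the paper: both write the integrand of $B_{p,q}(\alpha x_1+(1-\alpha)x_2,\alpha y_1+(1-\alpha)y_2)$ as the product of the $\alpha$-th and $(1-\alpha)$-th powers of the integrands for $(x_1,y_1)$ and $(x_2,y_2)$, then apply H\"older's inequality with exponents $1/\alpha$ and $1/(1-\alpha)$, and finally specialize to $\alpha=1/2$. The only cosmetic difference is that the paper uses the notation $c,d$ with $c+d=1$ in place of your $\alpha,1-\alpha$.
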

\begin{proof}
Let $(x_1,y_1),(x_2,y_2)\in (0,\infty)^2$, and $c,d\geq0$ with $c+d=1$, then we have
\begin{eqnarray}\label{eq5}
B_{p,q}\Big(c(x_1,y_1)+d(x_2,y_2)\Big)=B_{p,q}(cx_1+dx_2, cy_1+dy_2).
\end{eqnarray}
Applying the definition of $(p,q)$-extended beta function on the right hand side of inequality (\ref{eq5}), we have
\begin{align*}
&B_{p,q}\Big(c(x_1,y_1)+d(x_2,y_2)\Big)\\
&=\int\limits_{0}^{1}t^{cx_1+dx_2-1}(1-t)^{cy_1+dy_2-1}\exp\Big(-\frac{p}{t}-\frac{q}{1-t}\Big)dt\\
&=\int\limits_{0}^{1}t^{cx_1+dx_2 -(c+d)}(1-t)^{cy_1+dy_2-(c+d)}\exp\Big(-\frac{p(c+d)}{t}-\frac{q(c+d)}{1-t}\Big)dt\\
&=\int\limits_{0}^{1}t^{c(x_1-1)}t^{d(x_2-1}(1-t)^{c(y_1-1)}(1-t)^{d(y_2-1)}\exp\Big(-\frac{pc}{t}-\frac{qc}{1-t}\Big)\exp\Big(-\frac{pd}{t}-\frac{qd}{1-t}\Big)dt\\
&=\int\limits_{0}^{1}\Big(t^{x_1-1}(1-t)^{y_1-1}\exp\Big(-\frac{p}{t}-\frac{q}{1-t}\Big)\Big)^{c}\Big(t^{x_2-1}t^{y_2-1}\exp\Big(-\frac{p}{t}-\frac{q}{1-t}\Big)\Big)^{d}dt.
\end{align*}
Again by considering $\theta_1=\frac{1}{c}$, $\theta_2=\frac{1}{d}$, we can use the H\"{o}lder-Rogers inequality for above integrals and it follows
\begin{eqnarray*}
B_{p,q}\Big(c(x_1,y_1)+d(x_2,y_2)\Big)&\leq&
\Big(\int\limits_{0}^{1}t^{x_1-1}(1-t)^{y_1-1}\exp\Big(-\frac{p}{t}-\frac{q}{1-t}\Big)dt\Big)^{c}
\\&\times&\Big(\int\limits_{0}^{1}t^{x_2-1}t^{y_2-1}\exp\Big(-\frac{p}{t}-\frac{q}{1-t}\Big)dt\Big)^{d}\\
&=&\Big(B_{p,q}(x_1,y_1)\Big)^c\Big(B_{p,q}(x_2,y_2)\Big)^d.
\end{eqnarray*}
 This shows the logarithmic convexity of extended $(p,q)$-beta function $B_{p,q}(x,y)$ on $(0,\infty)^2$.\\
For $c=d=\frac{1}{2}$, the above inequality reduces to
\begin{eqnarray}\label{eq6}
B_{p,q}^2\Big( \frac{x_1+x_2}{2}, \frac{y_1+y_2}{2}\Big)\leq B_{p,q}(x_1,y_1)B_{p,q}(x_2,y_2).
\end{eqnarray}
Let $x,y>0$ be such that $\min_{a\in{R}}(x+a, x-a)>0$, then by taking $x_1=x+a$, $x_2=x+a$, $y_1=y+b$ and
$y_2=y-b$ in (\ref{eq6}), we get
\begin{eqnarray}
\Big[B_{p,q}(x, y)\Big]^2\leq B_{p,q}(x+a, y+b)B_{p,q}(x-a, y-b),
\end{eqnarray}
for all $p,q\geq0$.
\end{proof}
\section{Inequalities for $(p,q)$-extended confluent hypergeometric function}
In this section, we present the log-convexity and Tur\'{a}n type inequality for extended confluent hypergeometric function defined in (\ref{EChyp}). For this continuation, we recall the following well-known lemma.
\begin{lemma}\label{lem3}\cite{Biernacki} Consider the power series $f(x)=\sum_{n\geq0}a_nx^n$ and $g(x)=\sum_{n\geq0}b_nx^n$, where $a_n\in {R}$ and $b_n>0$ for all $n$. Further assume that both series converge on $|x|<\alpha$. If the sequence $\{a_n/b_n\}_n\geq0$ is increasing (or decreasing), then $x\mapsto f(x)/g(x)$ is also increasing (or decreasing) function on $(0,\alpha)$.
\end{lemma}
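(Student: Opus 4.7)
The plan is to establish the monotonicity of the ratio $h(x)=f(x)/g(x)$ by analyzing the sign of
\[ h'(x)=\frac{f'(x)g(x)-f(x)g'(x)}{g(x)^2} \]
on $(0,\alpha)$. Since every $b_n>0$ and $x>0$, one has $g(x)>0$, so $g(x)^2>0$, and it will suffice to determine the sign of the numerator $D(x):=f'(x)g(x)-f(x)g'(x)$. Absolute convergence of both power series on $|x|<\alpha$ will justify termwise differentiation and the Cauchy-product manipulations used below.

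The key step is to expand $D(x)$ as a single power series and then symmetrize. Writing $f'(x)=\sum_{m\ge 1}ma_mx^{m-1}$ and $g'(x)=\sum_{n\ge 1}nb_nx^{n-1}$, multiplying out and collecting by total degree gives
\[ D(x)=\sum_{k\ge 1}c_k\,x^{k-1},\qquad c_k=\sum_{m+n=k}(m-n)\,a_mb_n. \]
Next I would swap the roles of $m$ and $n$ in the inner sum and average the two expressions, which yields the symmetric form
\[ c_k=\frac{1}{2}\sum_{m+n=k}(m-n)(a_mb_n-a_nb_m)=\frac{1}{2}\sum_{m+n=k}(m-n)\,b_mb_n\left(\frac{a_m}{b_m}-\frac{a_n}{b_n}\right). \]
Each summand is now a product of factors whose individual signs I can control: when the sequence $\{a_n/b_n\}$ is increasing, the factors $(m-n)$ and $(a_m/b_m-a_n/b_n)$ agree in sign for every pair $(m,n)$, and together with $b_mb_n>0$ this forces $c_k\ge 0$ for every $k\ge 1$. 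Consequently $D(x)\ge 0$ on $(0,\alpha)$, so $h'(x)\ge 0$ and $h$ is increasing. The decreasing case follows by reversing all the inequalities.

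The main obstacle I anticipate is the bookkeeping in setting up the double sum for $D(x)$: the summations for $f'$ and $g'$ start at $m=1$ and $n=1$, but the missing $m=0$ or $n=0$ contributions vanish automatically because the $(m-n)$ factor kills them, so the index range can be extended to all pairs $(m,n)$ with $m,n\ge 0$. Convergence is not a genuine difficulty, since absolute convergence on $|x|<\alpha$ licenses every rearrangement and Cauchy product that is being performed. The real content of the argument, and the step worth getting right, is the symmetrization identity that rewrites each $c_k$ as a sum of sign-definite products, thereby converting a hypothesis on the monotonicity of coefficient ratios into a statement about the sign of $h'$ on the whole interval $(0,\alpha)$.
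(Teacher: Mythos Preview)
Your argument is correct and is exactly the classical proof of this lemma due to Biernacki and Krzy\.z: compute the numerator of $h'(x)$ as a Cauchy product, symmetrize in $(m,n)$, and read off the sign of each summand from the monotonicity of $\{a_n/b_n\}$ together with $b_mb_n>0$. One small wording issue: it is not the factor $(m-n)$ that kills the extended $m=0$ or $n=0$ terms, but rather the individual factors $m$ and $n$ coming from $f'$ and $g'$ respectively (the factor $(m-n)$ only vanishes when $m=n$). This does not affect the validity of your final formula for $c_k$.

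The paper itself does not supply a proof of this lemma; it is simply quoted from the reference \cite{Biernacki} and then applied. So there is no in-paper proof to compare against, but your write-up recovers precisely the original argument.
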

Note  that the above lemma is valid only if both $f$ and $g$ are both even or both  odd functions.
\begin{theorem}
Let $\beta\geq0$ and $\gamma,\delta>0$, then the following assertions for extended $(p,q)$-confluent hypergeometric function are true.\\
(i) For $\gamma\geq \delta$, the function $x\mapsto\Phi_{p,q}\Big(\beta;\gamma;x\Big)/\Phi_{p,q}\Big(\beta;\delta;x\Big)$ is increasing on $(0,\infty)$.\\
(ii)  For $\gamma\geq \delta$, we have \\$\delta\Phi_{p,q}\Big(\beta+1;\gamma+1;x\Big)\Phi_{p,q}\Big(\beta;\delta;x\Big)\geq \gamma\Phi_{p,q}(\beta;\gamma;x)\Phi_{p,q}
\Big(\beta+1;\delta+1;x\Big)$.\\
(iii) The function $x\mapsto\Phi_{p,q}\Big(\beta;\gamma;x\Big)$ is log-convex on ${R}$.\\
(iv) The function $(p,q)\mapsto \Phi_{p,q}\Big(\beta;\gamma;x\Big)$ is log convex on $(0,\infty)$ for fixed $x>0$.\\
(v) Let $\sigma>0$. then the function $$\beta\mapsto\frac{B(\beta,\gamma)\Phi_{p,q}\Big(\beta+\sigma;\gamma;x\Big)}{B(\beta+\sigma,\gamma)\Phi_{p,q}\Big(\beta;\gamma;x\Big)}$$
is decreasing on $(0,\infty)$ for fixed $\gamma,x>0$.
\end{theorem}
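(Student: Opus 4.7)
Parts (i)--(iv) can be dispatched from the existing toolbox -- Lemma \ref{lem3}, Theorem \ref{th1}, and the H\"older template of Theorem \ref{th2} -- while (v) is the real obstacle. For (i), I write $\Phi_{p,q}(\beta;\gamma;x)=\sum a_n x^n$ and $\Phi_{p,q}(\beta;\delta;x)=\sum b_n x^n$ via (\ref{EChyp}); after cancelling the $n$-independent factors, the ratio $a_n/b_n$ is a positive multiple of $B_{p,q}(\beta+n,\gamma-\beta)/B_{p,q}(\beta+n,\delta-\beta)$. Theorem \ref{th1} with $x_1=\beta+n$, $x=\beta+n+1$, $y_1=\delta-\beta$, $y=\gamma-\beta$ gives $(x-x_1)(y-y_1)=\gamma-\delta\ge 0$ and hence monotonicity of this ratio in $n$, so Lemma \ref{lem3} yields (i). For (ii), termwise differentiation of (\ref{EChyp}) combined with $B(\beta+1,\gamma-\beta)/B(\beta,\gamma-\beta)=\beta/\gamma$ gives
\[
\frac{d}{dx}\Phi_{p,q}(\beta;\gamma;x)=\frac{\beta}{\gamma}\,\Phi_{p,q}(\beta+1;\gamma+1;x).
\]
Substituting this into the nonnegative derivative of the quotient in (i), clearing the common factor $\beta$, and multiplying through by $\gamma\delta$ produces the stated Tur\'an-type inequality.

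For (iii) and (iv) I reuse the H\"older argument of Theorem \ref{th2}. Starting from (\ref{intch}), for (iii) I split $e^{(\alpha x_1+(1-\alpha)x_2)t}=(e^{x_1 t})^{\alpha}(e^{x_2 t})^{1-\alpha}$ and write the remaining factor $t^{\beta-1}(1-t)^{\gamma-\beta-1}\exp(-p/t-q/(1-t))$ as its own $(\alpha+(1-\alpha))$-power, so that H\"older's inequality (\ref{Holder}) with exponents $1/\alpha$ and $1/(1-\alpha)$ yields
\[
\Phi_{p,q}(\beta;\gamma;\alpha x_1+(1-\alpha)x_2)\le\Phi_{p,q}(\beta;\gamma;x_1)^{\alpha}\Phi_{p,q}(\beta;\gamma;x_2)^{1-\alpha},
\]
the normalizing factor $1/B(\beta,\gamma-\beta)$ absorbing because $\alpha+(1-\alpha)=1$. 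Part (iv) is identical, with the convex split applied instead to $-p/t-q/(1-t)$ along $\alpha(p_1,q_1)+(1-\alpha)(p_2,q_2)$.

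The hard part is (v). My plan is to interpret $G(\beta)$ via (\ref{intch}) as a product of a classical beta ratio and an expectation: writing $d\pi_\beta(t)\propto t^{\beta-1}(1-t)^{\gamma-\beta-1}e^{xt-p/t-q/(1-t)}\,dt$ and $\varphi(t)=(t/(1-t))^{\sigma}$, the $\Phi$-ratio becomes $[B(\beta,\gamma-\beta)/B(\beta+\sigma,\gamma-\beta-\sigma)]\cdot\mathbb{E}_{\pi_\beta}[\varphi]$. Differentiating $\log G$ in $\beta$ then reduces the monotonicity claim to a balance between digamma-function terms coming from the beta prefactors and a covariance of $\varphi$ with $\log[t/(1-t)]$ under $\pi_\beta$, whose sign is controlled by Chebyshev's inequality (\ref{Cheby}). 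The chief algebraic obstacle is that the stated prefactor $B(\beta,\gamma)$ does not quite match the $B(\beta,\gamma-\beta)$ emerging naturally from (\ref{intch}); once that bookkeeping is reconciled, Theorem \ref{th1} applied with arguments $(\beta+\sigma+n,\gamma-\beta-\sigma)$ versus $(\beta+n,\gamma-\beta)$ should close the inequality via Lemma \ref{lem3}.
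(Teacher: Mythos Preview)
Your treatment of (i)--(iii) matches the paper's proof: power-series ratio plus Theorem~\ref{th1} and Lemma~\ref{lem3} for (i), the derivative identity for (ii), and the H\"older split of the integral kernel for (iii). For (iv) you take a slightly different but perfectly valid route: you apply H\"older directly to the integral representation (\ref{intch}) in the $(p,q)$ variables, whereas the paper argues via the series (\ref{EChyp}), using that a sum of log-convex functions is log-convex and invoking Theorem~\ref{th2} termwise. Both work; your version is marginally more self-contained.

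The genuine gap is in (v). You correctly isolate the weight $g(t)=(t/(1-t))^{\sigma}$ and point to Chebyshev, but your plan to \emph{differentiate} $\log G(\beta)$ produces a ``balance between digamma-function terms \dots\ and a covariance'' that you never resolve: Chebyshev controls only the covariance piece, not the digamma contribution from the beta prefactors, so the argument as written does not close. The paper avoids this entirely by comparing $G$ at two points $\beta\le\beta'$ rather than differentiating. Setting
\[
h(t)=t^{\beta'-1}(1-t)^{\gamma-\beta'-1}\exp\Bigl(xt-\tfrac{p}{t}-\tfrac{q}{1-t}\Bigr),\quad
f(t)=\Bigl(\tfrac{t}{1-t}\Bigr)^{\beta-\beta'},\quad
g(t)=\Bigl(\tfrac{t}{1-t}\Bigr)^{\sigma},
\]
the quantities $G(\beta)$ and $G(\beta')$ are precisely $\int fgh/\int fh$ and $\int gh/\int h$, so a single application of Chebyshev's inequality (\ref{Cheby}) (with $f$ decreasing, $g$ increasing) gives the sign of $G(\beta)-G(\beta')$ directly---no digamma functions appear. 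Your alternative fallback via Theorem~\ref{th1} and Lemma~\ref{lem3} does not apply here, since (v) asserts monotonicity in the parameter $\beta$, not in the power-series variable $x$. Finally, your observation about the mismatch between $B(\beta,\gamma)$ in the statement and $B(\beta,\gamma-\beta)$ arising from (\ref{intch}) is well taken; the paper's own identity (\ref{eq9}) tacitly uses the latter.
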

\begin{proof}
From the definition of (\ref{EChyp}), it follows that
\begin{align}
\frac{\Phi_{p,q}\Big(\beta;\gamma;x\Big)}{\Phi_{p,q}\Big(\beta;\delta;x\Big)}=\frac{\sum_{n=0}^{\infty}a_n(c)x^n}
{\sum_{n=0}^{\infty}a_n(d)x^n}, where \quad a_n(t)=\frac{B_{p,q}(\beta+n,t-\beta)}{B(\beta,t-\beta)n!}.
\end{align}
If we denote $f_n=a_n(c)/a_n(d)$, then
\begin{eqnarray*}
f_n-f_{n+1}&=&\frac{a_n(c)}{a_n(d)}-\frac{a_{n+1}(c)}{a_{n+1}(d)}\\
&=&\frac{B(\beta,\delta-\beta)}{B(\beta,\gamma-\beta)}\Big(\frac{B_{p,q}(\beta+n,\gamma-\beta)}{B_{p,q}(\beta+n,\delta-\beta)}
-\frac{B_{p,q}(\beta+n+1,\gamma-\beta)}
{B_{p,q}(\beta+n+1,\delta-\beta)}\Big).
\end{eqnarray*}
Now take $x=\beta+n$, $y=\delta-\beta$, $x_1=\beta+n+1$, $y_1=\gamma-\beta$ in (\ref{eq1}). Since $(x-x_1)(y-y_1)=\gamma-\delta\geq0$, it follows from Theorem \ref{th1} that
\begin{eqnarray*}
\frac{B_{p,q}(\beta+n,\gamma-\beta)}{B_{p,q}(\beta+n,\delta-\beta)}\leq\frac{B_{p,q}(\beta+n+1,\gamma-\beta)}
{B_{p,q}(\beta+n+1,\delta-\beta)},
\end{eqnarray*}
 this is equivalent to say that  $\{f_n\}$ is an increasing  sequence and hence with the aid of Lemma \ref{lem3}, we observe that $x\mapsto\Phi_{p,q}\Big(\beta;\gamma;x\Big)/\Phi_{p,q}\Big(\beta;\delta;x\Big)$ is increasing on $(0,\infty)$.\\
To prove the assertion (ii), we recall the following well-known identity from \cite{Choi2014}:
\begin{eqnarray}\label{eq7}
\frac{d^n}{dx^n}\Phi_{p,q}\Big(\beta;\gamma;x\Big)=\frac{(\beta)_n}{(\gamma)_n}\Phi_{p,q}\Big(\beta+n;\gamma+n;x\Big).
\end{eqnarray}
Since the increasing properties of $x\mapsto\Phi_{p,q}\Big(\beta;\gamma;x\Big)/\Phi_{p,q}\Big(\beta,\delta;x\Big)$ is
equivalent to the following inequality
\begin{eqnarray}\label{eq8}
\frac{d}{dx}\Big(\frac{\Phi_{p,q}\Big(\beta;\gamma;x\Big)}{\Phi_{p,q}\Big(\beta,\delta;x\Big)}\Big)\geq0.
\end{eqnarray}
This together with (\ref{eq7}) implies
\begin{eqnarray*}
\Phi_{p,q}^\prime\Big(\beta;\gamma;x\Big)\Phi_{p,q}\Big(\beta;\delta;x\Big)&-&\Phi_{p,q}\Big(\beta;\gamma;x\Big)
\Phi_{p,q}^\prime\Big(\beta;\delta;x\Big)\\
&=&\frac{\beta}{\gamma}\Phi_{p,q}\Big(\beta+1;\gamma+1;x\Big)\Phi_{p,q}(\beta;\delta;x)\\&-&\frac{\beta}{\delta}
\Phi_{p,q}\Big(\beta;\gamma;x\Big)\Phi_{p,q}
\Big(\beta+1;\delta+1;x\Big)\geq0.
\end{eqnarray*}
This implies that
\begin{eqnarray*}
\delta\Phi_{p,q}\Big(\beta+1;\gamma+1;x\Big)\Phi_{p,q}(\beta;\delta;x)\geq \gamma\Phi_{p,q}\Big(\beta;\gamma;x\Big)\Phi_{p,q}
\Big(\beta+1;\delta+1;x\Big)
\end{eqnarray*}
which prove the assertion.
The log-convexity of $x\mapsto \Phi_{p,q}\Big(\beta;\gamma;x\Big)$ can be prove by using the integral representation of extended $(p,q)$-confluent hypergeometric function as given in (\ref{intch}) and by applying the H\"{o}lder-Rogers inequality for integrals as follows:
\begin{align*}
&\Phi_{p,q}\Big(\beta;\gamma;\alpha x+(1-\alpha)y\Big)\\
&=\frac{1}{B(\beta,\gamma-\beta)}\int_0^1t^{\beta-1}(1-t)^{\gamma-\beta-1}\exp\Big(\alpha xt+(1-\alpha)yt-\frac{p}{t}-\frac{q}{1-t}\Big)dt\\
&=\frac{1}{B(\beta,\gamma-\beta)}\int_0^1\Big[\Big(t^{\beta-1}(1-t)^{\gamma-\beta-1}\exp\Big( xt-\frac{p}{t}-\frac{q}{1-t}\Big)\Big)^\alpha\\
&\times\Big(t^{\beta-1}(1-t)^{\gamma-\beta-1}\exp\Big( yt-\frac{p}{t}-\frac{q}{1-t}\Big)\Big)^{1-\alpha}\Big]dt\\
&\leq \Big[\frac{1}{B(\beta,\gamma-\beta)}\int_0^1t^{\beta-1}(1-t)^{\gamma-\beta-1}\exp\Big( xt-\frac{p}{t}-\frac{q}{1-t}\Big) dt\Big]^\alpha\\
&\times\Big[\frac{1}{B(\beta,\gamma-\beta)}\int_0^1t^{\beta-1}(1-t)^{
\gamma-\beta-1}\exp\Big( xt-\frac{p}{t}-\frac{q}{1-t}\Big)dt\Big]^{1-\alpha}\\
&=\Big(\Phi_{p,q}\Big(\beta;\gamma;x\Big)\Big)^\alpha\Big(\Phi_{p,q}\Big(\beta;\gamma;y\Big)\Big)^{1-\alpha}, (x,y>0, \alpha\in[0,1]).
\end{align*}
 This prove that $x\mapsto\Phi_{p,q}\Big(\beta;\gamma;x\Big)$ is log-convex for a fixed $x>0$. For the case when $x<0$, then the assertion immediately follows from the identity (see \cite{Choi2014}):
\begin{eqnarray*}
\Phi_{p,q}\Big(\beta;\gamma;x\Big)= e^x\Phi_{q,p}\Big(\gamma-\beta;\gamma;-z\Big).
\end{eqnarray*}
Since, the infinite sum of log-convex functions is log-convex for $x>0$. Thus, the log-convexity of $(p,q)\mapsto\Phi_{p,q}\Big(\beta;\gamma;x\Big)$ is equivalent to prove that  $(p,q)\mapsto B(\beta+n,\gamma-\beta)$ is log-convex on $(0,\infty)$ and for non-negative integer $n$. From Theorem \ref{th2}, it is clear that  $(p,q)\mapsto B(\beta+n,\gamma-\beta)$ is log-convex for $\gamma>\beta>0$ and hence assertion (iv) is true.

Now, let $\beta^\prime\geq \beta$ and set $h(t)=t^{\beta^\prime-1}(1-t)^{\gamma-\beta^\prime-1}\exp\Big(xt-\frac{p}{t}-\frac{q}{1-t}\Big)$,
$f(t)=\Big(\frac{t}{1-t}\Big)^{\beta-\beta^\prime}$ and $g(t)=\Big(\frac{t}{1-t}\Big)^{\sigma}$. Then using the integral representation (\ref{intch}) of extended confluent hypergeometric function, we have
\begin{align}\label{eq9}
&\frac{B(\beta,\gamma)\Phi_{p,q}\Big(\beta+\sigma;\gamma;x\Big)}{B(\beta+\sigma,c)\Phi_{p,q}\Big(\beta;\gamma;x\Big)}
-\frac{B(\beta^\prime,\gamma)\Phi_{p,q}\Big(\beta^\prime+\sigma;\gamma;x\Big)}{B(\beta^\prime+\sigma,\gamma)
\Phi_{p,q}\Big(\beta^\prime;\gamma;x\Big)}\notag\\
&=\frac{\int_0^1f(t)g(t)h(t)dt}{\int_0^1f(t)h(t)dt}-\frac{\int_0^1g(t)h(t)dt}{\int_0^1h(t)dt}.
\end{align}
 One can easily determine that for $\beta^\prime\geq \beta$, the function $f$ is decreasing when $\sigma\geq0$ and the function  $g$ is increasing . Since $h$ is non negative function for $t\in[0,1]$. Thus, by reverse Chebyshev's reverse inequality (\ref{Cheby}), it follows that
\begin{eqnarray}
\int_0^1f(t)h(t)dt\int_0^1g(t)h(t)dt\leq \int_0^1h(t)dt\int_0^1f(t)g(t)h(t)dt.
\end{eqnarray}
This together with (\ref{eq9}) implies
\begin{eqnarray*}
\frac{B(\beta,\gamma)\Phi_{p,q}\Big(\beta+\sigma;\gamma;x\Big)}{B(\beta+\delta,\gamma)
\Phi_{p,q}\Big(\beta;\gamma;x\Big)}
-\frac{B(\beta^\prime,\gamma)\Phi_{p,q}\Big(\beta^\prime+\sigma;\gamma;x\Big)}{B(\beta^\prime+\sigma,\gamma)
\Phi_{p,q}
\Big(\beta^\prime;\gamma;x\Big)}\geq0,
\end{eqnarray*}
which is equivalent to say that the function
\begin{eqnarray*}
\beta\mapsto\frac{B(\beta,\gamma)\Phi_{p,q}\Big(\beta+\sigma;\gamma;x\Big)}{B(\beta+\sigma,\gamma)
\Phi_{p,q}\Big(\beta;\gamma;x\Big)}
\end{eqnarray*}
is decreasing on $(0,\infty)$.
\end{proof}
\begin{remark}
In particular, the following decreasing property of extended $(p,q)$-confluent hypergeometric function
\begin{eqnarray*}
\beta\mapsto\frac{B(\beta,\gamma)\Phi_{p,q}\Big(\beta+\sigma;\gamma;x\Big)}{B(\beta+\sigma,\gamma)
\Phi_{p,q}\Big(\beta;\gamma;x\Big)}
\end{eqnarray*}
is equivalent to the following inequality
\begin{align}
\Phi_{p,q}^2\Big(\beta+\sigma;\gamma;x\Big)\geq \frac{B^2(\beta+\sigma,\gamma)}{B(\beta+2\sigma,\gamma)B(\beta,\gamma)}\Phi_{p,q}\Big(\beta+2\sigma;\gamma;x\Big)
\Phi_{p,q}\Big(\beta;\gamma;x\Big).
\end{align}
When $p=q$, then the above inequality will reduce to the inequality recently proved by \cite{Mondal}. Similarly, when $p=q=0$, then the above inequality reduces to the inequality of confluent hypergeometric which is an improved version of Theorem 4(b) given in \cite{Karp}.
\end{remark}

\section{conclusion}
In this paper, we introduced inequalities for extended $(p,q)$-beta and $(p,q)$-confluent hypergeometric function defined by Choi et al. \cite{Choi2014}. Throughout in this paper, if we take $p=q$ then we get the inequalities of extended beta
function and extended confluent hypergeometric function recently introduced by Mondal \cite{Mondal}. Similarly if we take $p=q=0$, then the newly defined inequalities for extended $(p,q)$-beta function will reduce to the  inequalities of classical beta function (see \cite{Agarwal,Dragomir}).







%

\end{document}